\newtheorem{thm}{Theorem}
\newtheorem{lemma}{Lemma}
\theoremstyle{definition}
\newtheorem{defin}{Definition}
\theoremstyle{remark}
\newtheorem{example}{Example}
\newtheorem{assumption}{Assumption A\hspace*{-4pt}}
\newtheorem{remark}{Remark}
\newcommand{\ii}{\ensuremath{\mathbf{1}}}
\newcommand{\rr}{\ensuremath{\mathbb{R}}}
\newcommand{\zz}{\ensuremath{\mathbb{Z}}}
\newcommand{\nn}{\ensuremath{\mathbb{N}}}
\newcommand{\cc}{\ensuremath{\mathbb{C}}}
\newcommand{\xx}{{\sf{X}}}
\newcommand{\pp}{{\sf{P}}}
\newcommand{\mm}{{\sf{m}}}
\newcommand{\llf}{{\sf{L}}}
\newcommand{\bb}{\mathcal{B}}
\newcommand{\ff}{\mathcal{F}}
\newcommand{\pr}{{\partial}}
\begin{document}

\title[Multidimensional integral with respect to stochastic measure]{Sample path properties of multidimensional integral with respect to stochastic measure}

\author{Boris Manikin$^1$}

\author{Vadym Radchenko$^1$}

\address
{$^1$Department of Mathematical Analysis, Taras Shevchenko National University of Kyiv, 01601 Kyiv, Ukraine. Emails:bmanikin@knu.ua, vadymradchenko@knu.ua}

\begin{abstract}
The integral with respect to a multidimensional stochastic measure, for which we assume only $\sigma$-additivity in probability, is studied. The continuity and differentiability of its realizations are established.
\end{abstract}

\keywords{Stochastic measure, trajectories of random functions, multivariable Haar functions, Besov space}
\subjclass[2020]{Primary: 60H05; Secondary: 60G60, 60G17}

\maketitle

\section{Introduction}\label{scintr}

The main purpose of this article is studying the regularity property of the integral of a deterministic function with respect to stochastic measure $\mu$, where $\mu$ is defined on Borel subsets of $[0,1]^d$. In basic statements of the paper, for $\mu$ we assume only $\sigma$-additivity in probability. We study the regularity with respect to a parameter and with respect to the upper limit of the integral. Case $d=1$ was considered in~\cite[Section 2.3]{radbook}, but methods of \cite{radbook} do not work for $d\ge 2$. In our paper, we study the integral using the Fourier -- Haar expansion of the integrand.

Sample path behaviour of various stochastic processes was studied in many works; we mention only some of them. The conditions for sample boundedness and continuity of $\alpha$-stable processes were established in~\cite[Chapter 10]{samtaq}. H{\"o}lder continuity of harmonizable operator scaling stable random field is given by Corollary 5.5~\cite{bier09}. In Theorem 4.1 from~\cite{pani21} the uniform modulus of continuity for some self-similar $S\alpha S$ random field was obtained, while the upper bound of modulus of continuity of stable random fields is given by Proposition~5.1 and Corollary~5.3~\cite{bier15}. These results, for example, imply H{\"o}lder continuity of mentioned fields.

Conditions of continuity of Gaussian random fields may be found in Theorems 3.3.3, 3.4.1 and 3.4.3~\cite{adler81}. H{\"o}lder continuity of centered Gaussian random fields under certain conditions is proved in~\cite[Theorem 4.2]{xiao09}. Theorem~2.1~\cite{xiao10} gives an estimate of modulus of continuity of general random fields on metric space, these results are applied to harmonizable stable random fields.

In our article, we consider stochastic processes which are integrals with respect to a general stochastic measure. The definition of these measures, their properties and examples as well as information about Fourier -- Haar series may be found in Section \ref{sc:prel}. The continuity and differentiability of sample paths of parameter-dependent integral are established in Section \ref{sc:param}. The continuity of realizations of the integral as a function of the upper limit is studied in Section \ref{sc:upper}.

\section{Preliminaries}\label{sc:prel}

\subsection{Stochastic measures}\label{ss:stochm}

In this subsection, we give basic information concerning stochastic measures in a general setting. In statements of Sections~\ref{sc:param} and \ref{sc:upper}, this set function is defined on Borel subsets of $[0,1]^d$.

Let $\llf_0=\llf_0(\Omega, {\ff}, {\pp} )$ be the set of all real-valued
random variables defined on the complete probability space $(\Omega, {\ff}, {\pp} )$. Convergence in $\llf_0$ means the convergence in probability. Let ${\xx}$ be an arbitrary set and ${\bb}$ a $\sigma$-algebra of subsets of ${\xx}$.

\begin{defin}\label{def:stme}
A $\sigma$-additive mapping $\mu:\ {\bb}\to \llf_0$ is called {\em stochastic measure} (SM).
\end{defin}
In other words,
\begin{eqnarray}
\mu(A\cup B)=\mu(A)+\mu(B) \text{ for } A\cap B=\emptyset,\nonumber\\
\mu(A_n)\stackrel{\pp}{\to}0 \text{ for } A_n\downarrow \emptyset.
\label{condmu}
\end{eqnarray}

We do not assume the moment existence or martingale properties for SM. We can say that $\mu$ is an $\llf_0$--valued measure.

We note the following examples of SMs in multidimensional spaces. All measures, if nothing else is mentioned, are considered on Borel $\sigma$-algebra of sets in $[0,1]^d$.

\begin{example}
Let $\mu$ be a random orthogonal measure --- it is defined, for example, in \cite[Section 5.3]{gikhsko96} and \cite[Section 2.3]{krylov02} --- with finite structural function. Then $\mu$ is an SM in the sense of Definition \ref{def:stme}; condition \eqref{condmu} follows from \cite[Section 5.3, Theorem 1(d)]{gikhsko96}. For example, the set function $\mu(A)=\int_{[0,1]^d}{\ii}_A(t)\,{d}W(t)$, where we take multiple Wiener-It\^{o} integral (see its properties, for example, in \cite[Section 1.1]{nual06}), is a random orthogonal measure with structural function $\mm(A)=d!\lambda_d(A)$, where $\lambda_d$ is the $d$-dimensional Lebesgue measure.
\end{example}

\begin{example}
Let $\mu$ be an independently scattered $\alpha$-stable random measures for $\alpha\in (0,2]$ with finite control measure. According to the definition  --- see \cite[Definition 3.3.1]{samtaq} --- for disjoint Borel sets $\{A_n:n\geq 1\}$ the following holds:
\[
\mu(\cup_{n=1}^\infty A_n)=\sum_{n=1}^\infty \mu(A_n)\ \text{a. s.}
\]
Therefore, $\mu$ is an SM.
\end{example}

\begin{example}
\label{exhermite}
Consider the set function $\mu:\mathcal{B}([0,1]^d)\to \llf_0$ such that
\begin{equation}
\label{eqherm}
\mu(A)=\int_{\mathbb{R}^d} {\ii}_A(t)\,{d}Z_{H}^q(t),
\end{equation}
where $Z_{H}^q$ is the Hermite sheet (see, for example, \cite{tudor14} or \cite[Section 4]{tudor23}), $H=(H_1,\dots,H_d)$, $1/2<H_i<1$. Under these conditions, integral in \eqref{eqherm} is well defined, and $\mu$ is an SM in the sense of Definition \ref{def:stme}; the statement \eqref{condmu} can be proved using the representation of $\mu$ via Wiener integral (see \cite[(4.12)]{tudor23}). \cite[Proposition 3]{tudor14} implies the H\"{o}lder continuity of $\mu$.

If $q=1$ then the Hermite sheet coincides with the fractional Brownian sheet.
\end{example}

Theorem~10.1.1 of~\cite{kwawoy} states the sufficient conditions under which the product of one-dimensional SMs generates an SM.

SMs may be used for study of stochastic dynamical systems (see~\cite{bai22}).

For deterministic measurable functions $f:\xx\to\rr$, an integral of the form $\int_{\xx}f\,d\mu$ is studied
in~\cite[Chapter 7]{kwawoy}, ~\cite[Chapter 1]{radbook}. In particular, every bounded measurable $f$ is integrable w.~r.~t. any~$\mu$. An analogue of the Lebesgue dominated convergence theorem holds for this integral (see~\cite[Proposition 7.1.1]{kwawoy} or~\cite[Theorem 1.5]{radbook}).

Below we will use the following statement for SMs defined on arbitrary $\sigma$-algebra $\bb$.

\begin{lemma} \label{lmfkmu} (Lemma 3.1\cite{radt09}) Let $\mu$ be an SM on $(\xx,\bb)$, $f_k:\ {\xx}\to \rr,\ k\ge 1$, be measurable functions such that
$ \hat{f}(x)=\sum_{k=1}^{\infty} |{f_k}(x)|$ is integrable w.~r.~t.~$\mu$. Then
\begin{equation*}
\sum_{k=1}^{\infty}\Bigl(\int_{\xx} f_k\,d\mu \Bigr)^2<\infty\quad
\mbox{\textrm ~a.~s.}
\end{equation*}
\end{lemma}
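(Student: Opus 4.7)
The natural plan is to reduce the statement, via randomization by a Rademacher sequence and Fubini, to Kolmogorov's three-series theorem for sums of independent symmetric random variables. Write $\xi_k := \int_{\xx} f_k\,d\mu$; these variables exist in $\llf_0$ because $|f_k|\le \hat f$ and $\hat f$ is $\mu$-integrable. The guiding idea is that on the event $\{\sum_k \xi_k^2 = \infty\}$ the Rademacher sums $\sum_k \varepsilon_k \xi_k$ would have to diverge, whereas integrability of $\hat f$ will force those same sums to converge; Fubini is what bridges the two viewpoints.

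First, for each deterministic sign sequence $\varepsilon = (\varepsilon_k) \in \{-1,+1\}^\infty$ the series $\sum_k \varepsilon_k f_k$ is pointwise bounded by $\hat f$, hence integrable w.r.t.~$\mu$, and the dominated convergence theorem for stochastic measures (Proposition~7.1.1 of~\cite{kwawoy} or Theorem~1.5 of~\cite{radbook}) gives $S_N(\varepsilon) := \sum_{k=1}^N \varepsilon_k \xi_k \to S_\infty(\varepsilon) := \int_{\xx} \sum_k \varepsilon_k f_k\,d\mu$ in $\pp$-probability. I would then pass to the product space $\Omega \times \{-1,+1\}^\infty$ equipped with $\pp \otimes \nu$, where $\nu$ is the infinite product Rademacher measure: bounded convergence in $\varepsilon$ upgrades the previous convergence to convergence in $\pp\otimes\nu$-probability, so a subsequence $S_{N_j} \to S_\infty$ converges $(\pp\otimes\nu)$-a.s. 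By Fubini, for $\pp$-a.e.~$\omega$ the subsequence $S_{N_j}(\omega,\cdot)$ converges $\nu$-a.s.

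Fix such an $\omega$. The variables $\varepsilon_k \xi_k(\omega)$ are independent and symmetric under $\nu$, so the classical L\'evy--It\^o--Nisio theorem promotes the a.s.~convergence of a subsequence of partial sums of independent symmetric random variables to a.s.~convergence of the full series $\sum_k \varepsilon_k \xi_k(\omega)$. Kolmogorov's three-series theorem (equivalently, Khintchine's inequality) then forces $\sum_k \xi_k(\omega)^2 < \infty$, which is exactly the claim.

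The only delicate point is the passage from an a.s.~convergent subsequence of the Rademacher partial sums to a.s.~convergence of the full sequence; this is precisely where the symmetry and independence of the randomizing signs are used, via L\'evy's maximal inequality. The remaining pieces---the dominated convergence theorem for stochastic measures, Fubini, and Kolmogorov's theorem---fit together routinely.
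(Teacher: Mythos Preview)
The paper does not prove this lemma; it is quoted without proof as Lemma~3.1 of~\cite{radt09}, so there is no in-paper argument to compare against. Your approach---Rademacher randomisation, dominated convergence for the stochastic integral, Fubini on the product space, then L\'evy's inequality and the three-series/Khintchine criterion to force $\sum_k\xi_k^2<\infty$---is the standard one and is essentially how the result is established in the source reference and in the background literature (Kwapie\'n--Woyczy\'nski, Chapter~7).

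One step deserves slightly more care than the phrase ``bounded convergence in~$\varepsilon$'' suggests. To integrate $\pp(|S_N(\varepsilon)-S_\infty(\varepsilon)|>\delta)$ over $\varepsilon$ you implicitly need a jointly measurable version of $S_\infty(\omega,\varepsilon)$, and that is not automatic from its definition as an $\llf_0$-limit for each fixed $\varepsilon$. The clean fix is to show directly that $(S_N)$ is Cauchy in $\pp\otimes\nu$-probability: for $M<N$ one has $\bigl|\sum_{k=M+1}^{N}\varepsilon_k f_k\bigr|\le r_M:=\sum_{k>M}|f_k|$, with $r_M\downarrow 0$ pointwise and $r_M\le\hat f$; a short contradiction argument using the dominated convergence theorem for $\mu$ gives $\sup_{|g|\le r_M}\pp\bigl(\bigl|\int g\,d\mu\bigr|>\delta\bigr)\to 0$, hence $\sup_{\varepsilon}\pp(|S_N(\varepsilon)-S_M(\varepsilon)|>\delta)\to 0$. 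With that in hand, $S_N$ converges in $\pp\otimes\nu$-probability to a jointly measurable limit, and your subsequence/Fubini/L\'evy argument goes through unchanged.
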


\subsection{Besov spaces}

We recall the definition of Besov spaces $B^\alpha_{p,p}([0, 1]^d)$ following~\cite{devore} and~\cite{kamont}. Other approaches to the definition and properties of these spaces are considered in~\cite{beilni}, \cite[Sections 2.1 and 2.4]{runst}, \cite{triebel}, \cite[Sections 2 and 5]{tristr}. The equivalence of different definitions is discussed in~\cite{devore}, \cite[Section 17.2]{leoni}, \cite[Section 2.3]{runst}.

For functions $f\in \llf_p([0, 1]^d)$, we consider the value
\begin{equation*}
\|f\|_{B^\alpha_{p,p}([0, 1]^d)}=\|f\|_{\llf_{p}([0, 1]^d)}+\Bigl(\int_0^{1} (\omega_p(f, r))^p r^{-\alpha p-1}
\,dr\Bigr)^{1/p},
\end{equation*}
where $\omega_p$ denotes the $\llf_p$-modulus of continuity,
\begin{eqnarray*}
\omega_p(f, r) & = & \sup_{|h|\le r}\Bigl(\int_{I_{h}} |f(x+h)-f(x)|^p\,dx\Bigr)^{1/p},\\
{\textrm{where}}\quad I_{h} & = & \{x\in [0, 1]^d:\ x+h\in [0, 1]^d \},
\end{eqnarray*}
and $|h|$ denotes the Euclidean norm in $\rr^d$. Then
\[
B^\alpha_{p,p}([0, 1]^d)=\{f\in \llf_p([0, 1]^d):\ \|f\|_{B^\alpha_{p,p}([0, 1]^d)}<+\infty\},
\]
and $\|\cdot\|_{B^\alpha_{p,p}([0, 1]^d)}$ is a norm in this space.

Let $\mu$ be an SM defined on the Borel $\sigma$-algebra of~$[0, 1]^d$. For
$x=(x_1, x_2, \dots, x_d)\in [0, 1]^d$ set
\[
\mu(x)=\mu\Bigl(\prod_{i=1}^{d}[0, x_i]\Bigr).
\]

In the sequel, we will refer to the following assumption.

\begin{assumption}\label{assbm} There exists a real-valued finite measure~$\mm$
on~$(\xx, \bb)$ with the following property: if a measurable function $g:\xx\to\rr$ satisfies
\mbox{$\int_{\xx}g^2\,d\mm<+\infty$} then $g$ is integrable w.~r.~t.~$\mu$ on~$\xx$.
\end{assumption}

For orthogonal measure, we can take its structural measure as $\mm$.  For an $\alpha$-stable random measure A\ref{assbm} holds for the control measure $\mm$ (see (3.4.1)~\cite{samtaq}). For an SM from Example \ref{exhermite} we can take measure $\mm$ such that
$$
\mm(A)=\int_Adv\int_{[0,1]^d}\prod_{i=1}^d|u_i-v_i|^{2H_i-2}du,
$$
where $u$ and $v$ are $d$-dimensional vectors with the components $u_i$, $v_i$.

We have the following statement concerning the Besov regularity of SMs defined on Borel subsets of $[0,1]^d$.

\begin{thm}\label{thnbspd} (Theorem 5.1~\cite{radt09}) Let the random function
\[
\mu(x)=\mu\Bigl(\prod_{i=1}^{d}[0,x_i]\Bigr),\ x\in[0,1]^d,
\]
has continuous paths and Assumption~A\ref{assbm} holds.

Then for any $1\le p<+\infty,\ 0<\alpha<\min\{1/p,1/2\}$ the realization $\mu(x)$, $x\in[0,1]^d$ with probability 1 belongs to the Besov space~$B^\alpha_{p,p}([0,1]^d)$.
\end{thm}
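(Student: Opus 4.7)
The plan is to combine the multivariable Fourier-Haar expansion of the random function $\mu(x)$ with Lemma~\ref{lmfkmu} and to read off Besov regularity from almost sure size estimates on the Haar coefficients. Since the paths of $\mu$ are continuous on the compact set $[0,1]^d$, they are a.s. bounded, so $\mu\in\llf_p([0,1]^d)$ a.s.\ automatically, and only the modulus-of-continuity term $\bigl(\int_0^1(\omega_p(\mu,r))^p r^{-\alpha p-1}\,dr\bigr)^{1/p}$ requires work.

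I would fix the multivariable Haar basis on $[0,1]^d$: at each dyadic scale $n\ge 0$ there are $O(2^{nd})$ tensor-product basis elements $\chi$, each $\llf_2$-normalized and supported on a cube of side $2^{-n}$. The coefficient $c(\chi)=\int_{[0,1]^d}\mu(x)\chi(x)\,dx$ can be rewritten, via a Fubini identity for the SM integral, as $\int_{[0,1]^d}H_\chi(y)\,d\mu(y)$, where $H_\chi$ is a tensor product of primitives of one-dimensional Haar functions: bounded, supported in the same dyadic cube, and of sup norm of order $2^{-nd/2}$. Because each $H_\chi$ is bounded and $\mm$ is finite, Assumption~A\ref{assbm} ensures that Lemma~\ref{lmfkmu} applies level by level and yields an a.s.\ estimate $\sum_\chi c(\chi)^2\le C_n$ with controlled scaling of $C_n$ in $n$.

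Next I would invoke the known equivalent characterization of the Besov seminorm of $B^\alpha_{p,p}([0,1]^d)$ via a weighted sum of dyadic Haar coefficients (see \cite{devore, kamont, triebel}), which replaces $\int_0^1(\omega_p(\mu,r))^p r^{-\alpha p-1}\,dr$ by a series of the form $\sum_n w_n\sum_\chi|c(\chi)|^p$. To convert the level-wise $\ell^2$-bound from the previous step into the required $\ell^p$-estimate, I would use the embedding $\ell^2\hookrightarrow\ell^p$ when $p\ge 2$ and H\"{o}lder's inequality over the $O(2^{nd})$ coefficients at each level when $p<2$.

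The main obstacle is this final summation: balancing the $2^{-nd/2}$-scaling of $H_\chi$, the $2^{nd}$ cardinality at each level, the Besov weight $w_n$, and the loss in converting $\ell^2$ into $\ell^p$ produces precisely the two conditions $\alpha<1/p$ and $\alpha<1/2$. The first is the usual Besov ceiling, while the second is intrinsic to the method because Lemma~\ref{lmfkmu} supplies only $\ell^2$-control on each dyadic level.
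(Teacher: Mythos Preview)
The paper does not prove Theorem~\ref{thnbspd}: it is quoted as Theorem~5.1 of~\cite{radt09} and then used as a black box in Theorem~\ref{th:bsintd}. There is therefore no proof in the present paper against which to compare your proposal.

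Your outline is the natural strategy and is almost certainly what~\cite{radt09} does: expand $\mu(x)$ in the multivariable Haar system, rewrite each coefficient $c(\chi)=\int_{[0,1]^d}\mu(x)\chi(x)\,dx$ as $\int_{[0,1]^d}H_\chi\,d\mu$ (harmless because $\chi$ is a step function, so the identity reduces to a finite linear combination of values of $\mu$ on dyadic rectangles), control the coefficients via Lemma~\ref{lmfkmu}, and finish with the Haar characterization of $B^\alpha_{p,p}$ from~\cite{kamont}.

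One step in your sketch is not quite right, and it is the crucial one. Lemma~\ref{lmfkmu} is purely qualitative: it asserts $\sum_k\bigl(\int f_k\,d\mu\bigr)^2<\infty$ a.s., with no bound on the sum. Applying it ``level by level'' therefore cannot produce an estimate of the form ``$\sum_\chi c(\chi)^2\le C_n$ with controlled scaling of $C_n$ in $n$''; you would only get countably many separate finiteness statements with no relation between the levels. The correct way is to apply Lemma~\ref{lmfkmu} \emph{once} to the weighted family $\{a_n H_\chi:\chi\text{ at level }n,\ n\ge 0\}$ taken over all levels simultaneously, choosing weights $a_n$ so that $\int\bigl(\sum_n a_n\sum_\chi|H_\chi|\bigr)^2\,d\mm<\infty$ (this is where Assumption~A\ref{assbm} and the finiteness of $\mm$ enter). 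The single resulting a.s.\ inequality $\sum_n a_n^2\sum_{\chi}c(\chi)^2<\infty$ is exactly what feeds into the Besov--Haar characterization, and optimizing over admissible $(a_n)$ together with the $\ell^2\leftrightarrow\ell^p$ comparison you describe produces precisely the two constraints $\alpha<1/2$ and $\alpha<1/p$.
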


\subsection{Haar functions}\label{ss:haar}

In order to construct a version of the stochastic integral, we use the approximation of the integrand with Fourier--Haar series.

We follow the definition of one-dimensional Haar functions $\chi_n(x)$, $x\in [0,1]$, from ~\cite[Section 3]{kassaa}. For $n=2^j+i$, $1\le i \le 2^{j}$, $j\ge 0$ we write
\begin{equation}\label{eq:defdel}
\begin{split}
\Delta_n=\Delta_j^i=\bigl((i-1)2^{-j},i2^{-j}\bigr),\quad \overline{\Delta}_n=\bigl[(i-1)2^{-j},i2^{-j}\bigr],\\
\Delta_1=\Delta_0^0=(0,1),\quad \overline{\Delta}_1=[0,1],\\
\Delta_n^+=\bigl(\Delta_j^i\bigr)^+=\bigl((i-1)2^{-j},(2i-1)2^{-j-1}\bigr)=\Delta_{j+1}^{2i-1},\\
\Delta_n^-=\bigl(\Delta_j^i\bigr)^-=\bigl((2i-1)2^{-j-1},i2^{-j}\bigr)=\Delta_{j+1}^{2i}.
\end{split}
\end{equation}

Now let $\chi_1=1$, and
\begin{equation*}
\chi_n(x)= \left\{
\begin{array}{l}
0,\quad x\notin \overline{\Delta}_n,\\
2^{j/2},\quad x\in \Delta_n^+,\\
-2^{j/2},\quad x\in \Delta_n^-.
\end{array}
\right.
\end{equation*}
for  $2^j+1\le n \le 2^{j+1}$. In discontinuity points and at the endpoints of $[0,1]$ we define
\begin{equation}
\label{discp}
\begin{split}
\chi_n(x)=\bigl(\chi_n(x-)+\chi_n(x+)\bigr)/2,\quad x\in(0,1),\\
\chi_n(0)=\chi_n(0+),\quad \chi_n(1)=\chi_n(1-).
\end{split}
\end{equation}

For $g\in \llf_1([0,1])$, we define the Fourier -- Haar coefficients and sums in a standard way:
\begin{eqnarray*}
c_n(g)=\int_{[0,1]}g(x)\chi_n(x)\,dx=2^{j/2}\Bigl(\int_{\Delta_n^+}g(x)\,dx-\int_{\Delta_n^-}g(x)\,dx\Bigr),\\
S_N(g,x)=\sum_{n=1}^N c_n(g) \chi_n(x).
\end{eqnarray*}
We will approximate $g$ taking the Fourier -- Haar sums for values $N=2^k$, $k\in\zz_+$.

By (III.8), (III.8$'$)~\cite{kassaa}, for $x\ne i2^{-k}$, $0\le i\le 2^k$, holds
\begin{equation}\label{eq:stwokx}
S_{2^k}(g,x)=\sum_{i=1}^{2^k}2^k\ii_{\Delta_k^i}(x)\int_{\Delta_k^i}g(t)\,dt,
\end{equation}
and
\begin{equation}\label{eq:stwoki}
\begin{split}
S_{2^k}\Bigl(g,\frac{i}{2^k}\Bigr)=2^{k-1}\Bigl(\int_{\Delta_k^i}g(t)\,dt+\int_{\Delta_k^{i+1}}g(t)\,dt\Bigr),\quad i=1,2,\dots 2^{k}-1,\\
S_{2^k}(g,0)=2^k \int_{\Delta_k^1}g(t)\,dt,\quad S_{2^k}(g,1)=2^k \int_{\Delta_k^{2^k}}g(t)\,dt.
\end{split}
\end{equation}

For functions $f:[0,1]^d\to\rr$, we will use approximation by multivariate Fourier -- Haar sums.

The multivariate Haar functions are defined with equalities
\[
\chi^{(d)}_{n_1, \dots, n_d}(x)=\chi_{n_1}(x_1)\chi_{n_2}(x_2)\dots\chi_{n_d}(x_d),
\]
where $\chi_{n_s}$ are one-dimensional Haar functions, $x=(x_1,\dots,x_d)\in [0,1]^d$. Consider Fourier -- Haar coefficients
\[
c^{(d)}_{n_1, \dots, n_d}(f)=\int_{[0,1]^d}f(x)\chi^{(d)}_{n_1, \dots, n_d}(x)\,dx,
\]
and sums
\[
S^{(d)}_{2^k}(f,x)=\sum_{n_1, \dots, n_d\in \{1,2,3,\dots 2^k\}} c^{(d)}_{n_1, \dots, n_d}(f)\chi^{(d)}_{n_1, \dots, n_d}(x).
\]
In the sequel, we denote the set of elements $(n_1,\dots,n_d)$ such that $1\le n_s\le 2^k$ for  all $s, 1\le s\le d$ by $\nn_k$. In other words, $\nn_k=\{1,2,3,\dots, 2^k-1, 2^k\}^d$.

We need the following statement on the uniform convergence of multivariate Fourier -- Haar sums for continuous functions.
For $d=1$ this fact may be found, for example, in \cite[Theorem III.2]{kassaa}. A similar statement for multivariate periodic functions was proved in~\cite{andskop16}.

\begin{lemma}\label{lm:haarun}
If $f\in\cc([0,1]^d)$ then
\[
\sup_{x\in [0,1]^d} |S^{(d)}_{2^k}(f,x)-f(x)|\to 0,\quad k\to\infty.
\]
\end{lemma}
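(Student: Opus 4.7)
The plan is to reduce the $d$-variable assertion to the one-dimensional partial-sum formulas \eqref{eq:stwokx}--\eqref{eq:stwoki} by exploiting the tensor product structure of $\chi^{(d)}_{n_1,\dots,n_d}$, and then to use uniform continuity of $f$ on the compact cube $[0,1]^d$.

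First I would rewrite the Fourier--Haar sum as an iterated integral kernel. By definition of $c^{(d)}_{n_1,\dots,n_d}(f)$ and by Fubini,
\begin{equation*}
S^{(d)}_{2^k}(f,x)=\int_{[0,1]^d}f(t)\prod_{s=1}^{d}D_k(x_s,t_s)\,dt,\qquad
D_k(u,v):=\sum_{n=1}^{2^k}\chi_n(u)\chi_n(v).
\end{equation*}
For fixed $u\in[0,1]$, the one-dimensional function $t\mapsto D_k(u,t)$ is precisely the kernel which produces $S_{2^k}(g,u)$ when integrated against $g$. Thus formulas \eqref{eq:stwokx} and \eqref{eq:stwoki} identify $D_k(u,\cdot)$ explicitly: if $u$ is not a dyadic point of level $k$, then $D_k(u,\cdot)=2^{k}\ii_{\Delta_k^{i(u)}}$ where $\Delta_k^{i(u)}$ is the unique dyadic interval of level $k$ containing $u$; if $u=i/2^{k}$ is an interior grid point, then $D_k(u,\cdot)=2^{k-1}(\ii_{\Delta_k^i}+\ii_{\Delta_k^{i+1}})$; and similarly at the endpoints $0,1$. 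In every case $D_k(u,\cdot)$ is supported on a union of at most two dyadic intervals of length $2^{-k}$ adjacent to $u$, and $\int_{0}^{1}D_k(u,t)\,dt=1$.

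Next I would combine these factorwise facts. Multiplying out the product kernel, one obtains
\begin{equation*}
S^{(d)}_{2^k}(f,x)=\sum_{Q\in\mathcal{Q}_k(x)}\alpha_{Q}(x)\cdot\frac{1}{|Q|}\int_{Q}f(t)\,dt,
\end{equation*}
where $\mathcal{Q}_k(x)$ is the (at most $2^{d}$) dyadic cubes of side $2^{-k}$ whose closure contains $x$, the weights $\alpha_Q(x)\ge 0$ satisfy $\sum_{Q}\alpha_Q(x)=1$, and $|Q|=2^{-kd}$. In particular $S^{(d)}_{2^k}(f,x)$ is a convex combination of averages of $f$ over cubes contained in the closed ball of radius $\sqrt{d}\,2^{-k}$ around $x$.

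Finally, since $f$ is continuous on the compact set $[0,1]^d$, it is uniformly continuous: given $\varepsilon>0$, pick $\delta>0$ with $|f(t)-f(x)|<\varepsilon$ whenever $|t-x|\le\delta$, and then $k_0$ so that $\sqrt{d}\,2^{-k_0}<\delta$. For $k\ge k_0$ every point $t$ appearing in any of the averages above satisfies $|t-x|<\delta$, hence
\begin{equation*}
|S^{(d)}_{2^k}(f,x)-f(x)|\le\sum_{Q\in\mathcal{Q}_k(x)}\alpha_{Q}(x)\cdot\frac{1}{|Q|}\int_{Q}|f(t)-f(x)|\,dt<\varepsilon,
\end{equation*}
uniformly in $x\in[0,1]^d$. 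I expect the only mildly delicate step to be the careful bookkeeping at dyadic grid points, where $D_k(x_s,\cdot)$ is a two-interval average rather than a single indicator; but because the modification in \eqref{discp} is precisely designed to make \eqref{eq:stwoki} hold, this bookkeeping reduces to the observation that the resulting $\alpha_Q(x)$ are nonnegative and sum to $1$, after which uniform continuity finishes the argument.
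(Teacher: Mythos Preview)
Your argument is correct. You identify the tensor Dirichlet--Haar kernel $\prod_{s}D_k(x_s,t_s)$, read off from \eqref{eq:stwokx}--\eqref{eq:stwoki} that each factor $D_k(x_s,\cdot)$ is a nonnegative function of unit mass supported within distance $2^{-k}$ of $x_s$, and conclude that $S^{(d)}_{2^k}(f,x)$ is a convex average of values of $f$ in the ball of radius $\sqrt{d}\,2^{-k}$ around $x$. This immediately yields the clean bound
\[
|S^{(d)}_{2^k}(f,x)-f(x)|\le \omega\bigl(f,\sqrt{d}\,2^{-k}\bigr),
\]
and uniform continuity finishes the proof. The bookkeeping at dyadic grid points is indeed harmless, exactly for the reason you state: \eqref{eq:stwoki} forces $D_k(x_s,\cdot)$ to remain a probability density even there.

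The paper takes a different route. Rather than analyzing the product kernel directly, it writes $S^{(d)}_{2^k}$ as the composition $P_{k,d}\circ\cdots\circ P_{k,1}$ of one-dimensional partial-sum operators acting coordinate by coordinate, then telescopes $S^{(d)}_{2^k}(f,x)-f(x)$ into $d$ differences of the form $P_{k,l+1}(\cdots)-P_{k,l}(\cdots)$. Each difference is controlled by the one-variable estimate \eqref{eq:sumdif} together with a propagation bound $\omega(P_{k,j}(u),m2^{-k})\le\omega(u,(m+2)2^{-k})$, producing the estimate $\sum_{l=0}^{d-1}\omega(f,(2l+1)2^{-k})$. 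Your kernel argument is more direct and yields a single modulus term with a better constant; the paper's operator-telescoping argument is more modular and would adapt to situations where the one-dimensional building block is known only as a bounded operator with a modulus estimate, without an explicit nonnegative kernel.
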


\begin{proof}
For bounded functions $u:[0,1]^d\to\rr$ we use the following notation of the uniform modulus of continuity
\[
\omega(u,r)=\sup_{x, x+h \in [0,1]^d,\ |h|\le r}|u(x+h)-u(x)|.
\]
From \eqref{eq:stwokx} and \eqref{eq:stwoki} (see also (III.15)\cite{kassaa}) it follows that for $d=1$ holds
\begin{equation}\label{eq:sumdif}
|S_{2^k}(g,x)-g(x)|\le \omega(g,2^{-k}).
\end{equation}

For $t=(t_1,\dots t_d)$ we get
\begin{eqnarray*}
S^{(d)}_{2^k}(f,x)=\sum_{1\le n_1\le 2^k}\dots\sum_{1\le n_d\le 2^k}\chi_{n_1}(x_1)\dots\chi_{n_d}(x_d)\\
\times \int_{[0,1]^d}f(t)\chi_{n_1}(t_1)\chi_{n_2}(t_2)\dots\chi_{n_d}(t_d) \,dt.
\end{eqnarray*}

We consider the operators
\[
P_{k,j}(u)=S_{2^k}(u,x_j),
\]
where we take one-dimensional Fourier -- Haar sums of the function $u(x_1,\dots,\ x_d)$ in coordinate $x_j$. Then
\[
S^{(d)}_{2^k}(f,x)=P_{k,d}(P_{k,d-1}(\dots P_{k,1}(f)\dots)).
\]
Estimate \eqref{eq:sumdif} implies
\begin{equation}\label{eq:pfomega}
|P_{k,j}(u)(x)-u(x)|\le \omega(u,2^{-k}),
\end{equation}
and it is easy to see that for integral $m\ge 1$
\begin{equation}\label{eq:puomega}
\omega(P_{k,j}(u),m2^{-k})\le \omega(u,(m+2)2^{-k}).
\end{equation}
We have
\begin{eqnarray*}
|S^{(d)}_{2^k}(f,x)-f(x)|\le |P_{k,1}(f)(x)-f(x)|\\
+ \sum_{l=1}^{d-1}|P_{k,l+1}(P_{k,l}(\dots P_{k,1}(f)\dots))-P_{k,l}(P_{k,l-1}(\dots P_{k,1}(f)\dots))|,
\end{eqnarray*}
where
\begin{eqnarray*}
|P_{k,l+1}(P_{k,l}(\dots P_{k,1}(f)\dots))-P_{k,l}(P_{k,l-1}(\dots P_{k,1}(f)\dots))|\\
\stackrel{\eqref{eq:pfomega}}{\le} \omega(P_{k,l}(\dots P_{k,1}(f)\dots),2^{-k})\stackrel{\eqref{eq:puomega}}{\le}  \omega(f,(2l+1)2^{-k}).
\end{eqnarray*}
Therefore,
\begin{equation}\label{eq:estsf}
|S^{(d)}_{2^k}(f,x)-f(x)|\le \sum_{l=0}^{d-1}\omega(f,(2l+1)2^{-k}).
\end{equation}
For continuous $f$ the right hand side of \eqref{eq:estsf} tends to 0 as $k\to\infty$, that implies the statement of our lemma.
\end{proof}

\begin{remark}
We obtained estimate \eqref{eq:estsf} for all bounded $f\in\llf_1([0,1]^d)$, not necessarily continuous.
\end{remark}

\section{Parameter dependent integral}\label{sc:param}

In this section, we study the properties of random function
\begin{equation}\label{eq:defeta}
\eta(z)=\int_{[0,1]^d}f(z, x)\,d\mu(x),\ z\in Z,
\end{equation}

Note that parameter stochastic integral w.r.t. Brownian sheet was considered in~\cite{sot_tud_2006}, \cite{sot_viit_2015}.

If $f$ is continuous in $x$ then Lemma~\ref{lm:haarun} and analogue of the dominated convergence theorem (\cite[Theorem 1.5]{radbook}) imply that for each fixed $z$
\[
\int_{[0,1]^d}f(z, x)\,d\mu(x)={\rm p}\lim_{k\to\infty} \int_{[0,1]^d}S^{(d)}_{2^k}(f, x)\,d\mu(x).
\]

Therefore,
\begin{equation}\label{eq:defetas}
\tilde{\eta}(z)=\int_{[0,1]^d}S^{(d)}_{1}(f, x)\,d\mu(x)+\sum_{k=1}^{\infty}\int_{[0,1]^d}(S^{(d)}_{2^k}(f, x)-S^{(d)}_{2^{k-1}}(f, x))\,d\mu(x).
\end{equation}
is the version of $\eta(z)$.

In \eqref{eq:defetas} we have integrals of simple functions, and each integral is equal to respective linear combination of values of $\mu$. We fix the same version of $\mu$ for all these values.

We will use notation $e_i=(0,\dots,0,1,0,\dots,0)$, where 1 stays on $i$-th place, $e_i\in\rr^d$.

\begin{lemma}\label{lm1}
Let the function $f(z,x): Z\times [0,1]^d\to\rr$ be continuously differentiable $l$ times on $[0,1]^d$ for each $z$, $l\ge 0$, and
\begin{equation}\label{eqf1}
\Bigl|\dfrac{\pr^{r} f(z,x)}{\pr x_{s_1}\dots\pr x_{s_r}}\Bigr|\le C_f,\ x\in[0,1]^d
\end{equation}
for some constant $C_f>0$, which is independent of~$z$, and all $0\le r\le l$, $s_1,\dots,s_r\subset \{1,\dots,d\}$. Moreover, let $l$-th derivatives be H\"{o}lder continuous with an exponent $\alpha>0$:
\begin{equation}
\label{eqf2}
\Bigl|\dfrac{\partial^{l} f(z,x^{(1)})}{\partial x_{s_1}\dots\partial x_{s_l}}-\dfrac{\partial^{l} f(z,x^{(2)})}{\partial x_{s_1}\dots\partial x_{s_l}}\Bigr|\le C_f|x^{(1)}-x^{(2)}|^\alpha.
\end{equation}

If $l+\alpha>d/2$, then the version~\eqref{eq:defetas} satisfies the inequality
\begin{equation*}
|\tilde{\eta}(z)|\le C_f C^{(d)}_{\mu}(\omega),
\end{equation*}
where $C^{(d)}_{\mu}(\omega)$ denotes a random constant that depends only of dimension $d$ and SM $\mu$, $C^{(d)}_{\mu}(\omega)<\infty$ a.s.

Moreover, series~\eqref{eq:defetas} converge absolutely and uniformly, i. e.
\begin{equation}\label{eq:absuni}
\sum_{k=1}^{\infty}\sup_{z\in Z}\Bigl|\int_{[0,1]^d}(S^{(d)}_{2^k}(f, x)-S^{(d)}_{2^{k-1}}(f, x))\,d\mu(x)\Bigr|\le C_f C^{(d)}_{\mu}(\omega).
\end{equation}

\end{lemma}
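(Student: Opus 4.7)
The plan is to expand each integral in \eqref{eq:defetas} along the multivariate Haar basis. With $\nu_n:=\int_{[0,1]^d}\chi^{(d)}_n\,d\mu$ (a random variable independent of $z$ and $f$),
\[
\int_{[0,1]^d}(S^{(d)}_{2^k}(f,x)-S^{(d)}_{2^{k-1}}(f,x))\,d\mu(x)=\sum_{n\in\nn_k\setminus\nn_{k-1}}c^{(d)}_n(f)\,\nu_n,
\]
so it suffices to produce, uniformly in $z\in Z$, an a.s.\ majorant of the form $C_f\cdot C^{(d)}_\mu(\omega)$ for $\sum_{n\neq(1,\ldots,1)}|c^{(d)}_n(f)||\nu_n|$. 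The $C_f$-linear factor will come from the smoothness of $f$; the random constant will be produced by two applications of Lemma~\ref{lmfkmu}.

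For the coefficient bound, assume $l\le d$ (larger $l$ gives no improvement beyond the full mixed partial) and fix $n$ with all $n_s>1$; write $n_s=2^{j_s}+i_s$ with $j_s\ge 0$ and let $m_s$ denote the midpoint of $\overline{\Delta}_{n_s}$. Since each $\chi_{n_s}$ has zero mean in $x_s$, one may replace $f$ by $\tilde f:=\prod_s(I-E_s)f$ where $(E_sg)(x):=g(x)|_{x_s=m_s}$. For any permutation $\sigma$ of $\{1,\ldots,d\}$, apply the first $l$ operators via $(I-E_{\sigma(r)})g=\int_{m_{\sigma(r)}}^{x_{\sigma(r)}}\partial_{\sigma(r)}g\,dy_{\sigma(r)}$ to expose the mixed partial $\partial^l f/\partial x_{\sigma(1)}\ldots\partial x_{\sigma(l)}$, and handle the remaining $d-l$ operators via the H\"older bound \eqref{eqf2}; this gives
\[
|\tilde f(z,x)|\le C_f\prod_{r\le l}|x_{\sigma(r)}-m_{\sigma(r)}|\cdot\prod_{r>l}|x_{\sigma(r)}-m_{\sigma(r)}|^{\alpha/(d-l)}.
\]
Taking the geometric mean of these $d!$ bounds symmetrizes the exponents and yields $|\tilde f(z,x)|\le C_f\prod_s|x_s-m_s|^{\gamma}$ with $\gamma:=(l+\alpha)/d>1/2$, whence $|c^{(d)}_n(f)|\le C_f'\prod_s 2^{-j_s(1/2+\gamma)}$ uniformly in $z$. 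Indices with some $n_s=1$ reduce to Haar coefficients of partial averages of $f$ in dimension $|S|<d$ and satisfy the analogous (and, in fact, sharper) estimate.

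For the random side, fix a level vector $(j_s)$ and observe that the supports of $\chi^{(d)}_n$ over the $\prod 2^{j_s}$ translations are pairwise disjoint with $\sum_i|\chi^{(d)}_n(x)|\le\prod 2^{j_s/2}$, so Lemma~\ref{lmfkmu} gives $\zeta^2_{(j_s)}:=\sum_i\nu_n^2<\infty$ a.s. Cauchy--Schwarz over translations yields $\sum_i|c^{(d)}_n(f)||\nu_n|\le C_f\,2^{-\gamma\sum j_s}\zeta_{(j_s)}$, so summing over $(j_s)\in\zz_+^d$ majorises the left side of \eqref{eq:absuni} by $C_f\sum_{(j_s)}2^{-\gamma\sum j_s}\zeta_{(j_s)}$. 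A second Cauchy--Schwarz with weight $\epsilon\in(1,2\gamma)$---a nonempty range precisely because $\gamma>1/2$---bounds this by
\[
\Bigl(\sum_{(j_s)}2^{-(2\gamma-\epsilon)\sum j_s}\Bigr)^{\!1/2}\Bigl(\sum_{(j_s)}2^{-\epsilon\sum j_s}\zeta^2_{(j_s)}\Bigr)^{\!1/2}.
\]
The first sum is a finite product of convergent geometric series; the second equals $\sum_n 2^{-\epsilon\sum j_s(n)}\nu_n^2$, a.s.\ finite by a further application of Lemma~\ref{lmfkmu} with $f_n:=2^{-\epsilon\sum j_s(n)/2}\chi^{(d)}_n$: for each $x$ exactly one translation contributes per level, so $\sum_n|f_n(x)|=\sum_{(j_s)}2^{(1-\epsilon)\sum j_s/2}$ converges iff $\epsilon>1$. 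Setting $C^{(d)}_\mu(\omega)$ to be the resulting product (plus $|\mu([0,1]^d)|$ to absorb the $k=0$ term $\int S^{(d)}_1(f,\cdot)\,d\mu$, trivially bounded by $C_f|\mu([0,1]^d)|$) establishes both \eqref{eq:absuni} and the bound on $|\tilde\eta(z)|$.

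The main obstacle is the coefficient estimate: the sharp exponent $\gamma=(l+\alpha)/d$, which matches the threshold $l+\alpha>d/2$ exactly, is obtained only after averaging the bounds over all permutations; a single asymmetric bound yields $\gamma=\min(1,\alpha/(d-l))$ and imposes the stronger, suboptimal condition $l+2\alpha>d$.
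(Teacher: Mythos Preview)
Your proof is correct and follows the same overall architecture as the paper's: expand in multivariate Haar functions, bound the coefficients $c^{(d)}_{n_1,\dots,n_d}(f)$ via the smoothness of $f$, then control $\sum_n |c^{(d)}_n(f)|\,|\nu_n|$ by Cauchy--Schwarz together with Lemma~\ref{lmfkmu}. The random constant you build is exactly the paper's $P_2^{1/2}$ (your $\epsilon$ is the paper's $\beta+1$), and your deterministic factor matches the paper's $P_1^{1/2}$; the constraint $1<\epsilon<2\gamma$ is precisely the paper's ``for sufficiently small $\beta>0$'' with the threshold $l+\alpha>d/2$.

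The one genuine difference is how the coefficient bound is reached. The paper writes $c^{(d)}_n(f)$ as an iterated finite difference, peels off the $l$ largest scales $j^{(d)}\ge\dots\ge j^{(d-l+1)}$ by differentiating and then uses the H\"older condition on the next scale $j^{(d-l)}$, yielding the asymmetric ordered--statistics bound~\eqref{estc}; the symmetrization $j^{(d)}+\dots+j^{(d-l+1)}+\alpha j^{(d-l)}\ge \frac{l+\alpha}{d}\sum_s j_s$ is only invoked later inside $P_1$. You instead symmetrize at the pointwise level: for each permutation $\sigma$ (and implicitly each choice of which remaining coordinate receives the full H\"older exponent $\alpha$) you get a bound with exponents $(1,\dots,1,\alpha,0,\dots,0)$, and the geometric mean over all such choices produces the uniform exponent $\gamma=(l+\alpha)/d$ directly. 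Both routes land on the same summability, but yours avoids the ordered--statistics bookkeeping at the cost of a slightly weaker (already symmetrized) coefficient estimate. Two small remarks: your intermediate display with exponent $\alpha/(d-l)$ on each of the last $d-l$ coordinates is itself already a geometric mean (of the $d-l$ bounds obtained by choosing which coordinate carries the H\"older factor), so that step deserves one more word of justification; and the first invocation of Lemma~\ref{lmfkmu} for $\zeta^2_{(j_s)}$ is unnecessary, since that is a finite sum --- only the application to $\sum_n 2^{-\epsilon\sum j_s}\nu_n^2$ is doing real work.
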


\begin{proof}

We have that
\begin{equation}\label{eq:stwok}
\int_{[0,1]^d}S^{(d)}_{2^k}(f, x)\,d\mu(x)=\sum_{(n_1,\dots,n_d)\in\nn_k} c^{(d)}_{n_1, \dots, n_d}(f)\int_{[0,1]^d} \chi^{(d)}_{n_1, \dots, n_d}(x)\,d\mu(x).
\end{equation}

For each $n_s\ge 2$, take $j_s$ such that $2^{j_s}+1\le n_s\le 2^{j_s+1}$, $j_s\ge 0$. Denote by $\{j^{(k)},1\leq k\leq d\}$ the permutation of $\{j_k,1\leq k\leq d\}$ such that
\[
j^{(1)}\leq j^{(2)}\leq \dots \leq j^{(d)}.
\]
If $n_{s_1}=\dots=n_{s_i}=1$, we set $j^{(1)}=\dots=j^{(i)}=0$. We rewrite the expression for $c^{(d)}_{n_1, \dots, n_d}(f)$ in the following way:
\begin{equation}\label{eq:estcn}
\begin{split}
c^{(d)}_{n_1, \dots, n_d}(f)=\int_{[0,1]^d}f(z,t)\chi^{(d)}_{n_1, \dots, n_d}(t)\,dt =\int_{[0,1]^d}f(z,t)\prod_{1\le s\le d} \chi_{n_s}(t_s)\,dt\\
=2^{\sum_{1\le s\le d,n_s\ge 2} j_s/2}\int_{[0,1]^d}f(z,t)\prod_{1\le s\le d,n_s\ge 2}\Bigl(\ii_{\Delta_{n_s}^+}(t_s)-\ii_{\Delta_{n_s}^-}(t_s)\Bigr)\,dt\\
=2^{\sum_{1\le s\le d,n_s\ge 2} j_s/2}\int_{[0,1]^d}f(z,t)\\
\times\prod_{1\le s\le d,n_s\ge 2}\Bigl(\ii_{\Delta_{n_s}^+}(t_s)-\ii_{\Delta_{n_s}^+}(t_s-2^{-j_s-1})\Bigr)\,dt\\
\stackrel{(*)}{=}2^{\sum_{1\le s\le d,n_s\ge 2} j_s/2}\\
\times \int_{[0,1]^d}\sum_{\varepsilon_s\in\{0,1\},n_s\ge 2}(-1)^{\varepsilon_1 +\dots+\varepsilon_d}f(z,t_1+\varepsilon_1 2^{-j_1-1},\dots, t_d+\varepsilon_d 2^{-j_d-1})\\
\times\prod_{1\le s\le d,n_s\ge 2}\ii_{\Delta_{n_s}^+}(t_s)\,dt
=2^{\sum_{1\le s\le d,n_s\ge 2} j_s/2}\\
\times\int_{\prod_{1\le s\le d,n_s\ge 2}\Delta_{n_s}^+}\sum_{\varepsilon_s\in\{0,1\},n_s\ge 2}(-1)^{\varepsilon_1 +\dots+\varepsilon_d}\\
\times f(z,t_1+\varepsilon_1 2^{-j_1-1},\dots, t_d+\varepsilon_d 2^{-j_d-1})\,dt.
\end{split}
\end{equation}
In equality (*) we opened the brackets in the product
\[
\prod_{1\le s\le d,n_s\ge 2}\Bigl(\ii_{\Delta_{n_s}^+}(t_s)-\ii_{\Delta_{n_s}^+}(t_s-2^{-j_s-1})\Bigr)
\]
and used the change of the variables $t_s-2^{-j_s-1}\to t_s$ in $\ii_{\Delta_{n_s}^+}(t_s-2^{-j_s-1})$.

The last sum in \eqref{eq:estcn} can be represented as the sum of at most $2^{d-1}$ summands of the form
\[
f(z,t^*)-f(z,t^*+2^{-j^{(d)}-1}e_{s_d})=\int_0^{2^{-j^{(d)}-1}}\frac{\partial f(z,t^*+h_{s_d})}{\partial t_{s_d}}dh_{s_d},\qquad j^{(d)}=j_{s_d}.
\]
Now we repeat the same thoughts $l-1$ times and use the properties of a function $f$, which leads to the following inequality
\begin{eqnarray}
\label{estc}
|c^{(d)}_{n_1, \dots, n_d}(f)|\leq C_f2^{d-l-1}2^{\sum_{1\le s\le d,n_s\ge 2} -j_s/2}2^{-(j^{(d)}+\dots+j^{(d-l+1)}+\alpha j^{(d-l)})}.
\end{eqnarray}

Further, we have the estimate
\begin{equation*}
\begin{split}
\sum_{k=1}^{\infty}\sup_{z\in Z}\Bigl|\int_{[0,1]^d}(S^{(d)}_{2^k}(f, x)-S^{(d)}_{2^{k-1}}(f, x))\,d\mu(x)\Bigr|\\
\le \sum_{k=1}^{\infty}\sup_{z\in Z} \Bigl|\sum_{(n_1,\dots,n_d)\in\mathbb{N}_k\setminus\mathbb{N}_{k-1}} c^{(d)}_{n_1, \dots, n_d}(f)\int_{[0,1]^d} \chi^{(d)}_{n_1, \dots, n_d}(x)\,d\mu(x)\Bigr|\\
\stackrel{\eqref{estc}}{\leq} 2^{d} C_f  \sum_{k=1}^{\infty}\sum_{(n_1,\dots,n_d)\in\mathbb{N}_k\setminus\mathbb{N}_{k-1}}\hspace{-24pt} 2^{\sum_{1\le s\le d,n_s\ge 2} -j_s/2}2^{-(j^{(d)}+\dots+j^{(d-l+1)}+\alpha j^{(d-l)})}\\
\times\Bigl|\int_{[0,1]^d} \chi^{(d)}_{n_1, \dots, n_d}(x)\,d\mu(x)\Bigr|\\
\leq 2^{d} C_f \biggl( \sum_{k=1}^{\infty}\sum_{(n_1,\dots,n_d)\in\mathbb{N}_k\setminus\mathbb{N}_{k-1}}2^{\beta\sum_{1\le s\le d,n_s\ge 2}j_s}2^{-2(j^{(d)}+\dots+j^{(d-l+1)}+\alpha j^{(d-l)})}\biggr)^{1/2}\\
\times \biggl( \sum_{k=1}^{\infty}\sum_{(n_1,\dots,n_d)\in\mathbb{N}_k\setminus\mathbb{N}_{k-1}}2^{(-\beta-1)\sum_{1\le s\le d,n_s\ge 2}j_s}\Bigl(\int_{[0,1]^d} \chi^{(d)}_{n_1, \dots, n_d}(x)\,d\mu(x)\Bigr)^2\biggr)^{1/2}\\
:=2^{d} C_fP_1^{1/2}P_2^{1/2}.
\end{split}
\end{equation*}
For a sufficiently small $\beta>0$ we have the following estimates for $P_1$:
\begin{eqnarray}
\nonumber
P_1=\sup_{k\geq 1}\sum_{(n_1,\dots,n_d)\in\mathbb{N}_k}2^{\beta\sum_{1\le s\le d,n_s\ge 2}j_s}2^{-2(j^{(d)}+\dots+j^{(d-l+1)}+\alpha j^{(d-l)})}\\
\nonumber
=\sup_{k\geq 1}\sum_{A\subset \{1,\dots,d\}}\sum_{\substack{0\leq j_i\leq k,\\i\in A}}2^{(\beta+1)\sum_{i\in A}j_i-2(j^{(d)}+\dots+j^{(d-l+1)}+\alpha j^{(d-l)})}\\
\leq 2^d\sup_{k\geq 1}\sum_{0\leq j_i\leq k}2^{(\beta+1-2(l+\alpha)/d)\sum_{i=1}^dj_i}=2^dC.
\label{eqP1}
\end{eqnarray}
Here we denoted the set of indexes $i$, for which $n_s=1$, as $A$. It is left to prove that for each fixed $\beta>0$ $P_2\leq C_\mu^{(d)}(\omega)$. As follows from Lemma \ref{lmfkmu}, it is sufficient to show that
\begin{equation}
\tilde{P}_2=\sum_{k=1}^{\infty}\sum_{(n_1,\dots,n_d)\in\mathbb{N}_k\setminus\mathbb{N}_{k-1}}2^{(-\beta/2-1/2)\sum_{1\le s\le d,n_s\ge 2}j_s}| \chi^{(d)}_{n_1, \dots, n_d}(x)|\leq C^{(d)},
\label{eqP2}
\end{equation}
where by $C^{(d)}$ we denote positive constant that depends only on $d$. The inequality \eqref{eqP2} holds true, as is proved below.
\begin{eqnarray*}
\tilde{P}_2=\sup_{k\geq 1}\sum_{(n_1,\dots,n_d)\in\mathbb{N}_k}2^{(-\beta/2-1/2)\sum_{1\le s\le d,n_s\ge 2}j_s}|\chi_{n_1}(x_1)\dots\chi_{n_d}(x_d)|\\
\leq \sup_{k\geq 1}\sum_{(n_1,\dots,n_d)\in\mathbb{N}_k}2^{(-\beta/2)\sum_{1\le s\le d,n_s\ge 2}j_s}\\
\times\prod_{s=1}^d\Bigl(\mathbf{1}_{\Delta_{n_s}}(x_s)+\frac{1}{2}\mathbf{1}_{ \{(i-1)2^{-j_s}\} }(x_s)+\frac{1}{2}\mathbf{1}_{ \{i2^{-j_s}\} }(x_s)\Bigr)\\
\stackrel{(*)}{\leq} \sup_{k\geq 1}\sum_{A\subset \{1,\dots,d\}}\sum_{\substack{0\leq j_s\leq k,\\s\in A}}2^{(-\beta/2)\sum_{s\in A}j_s}\leq 2^d\sup_{k\geq 1}\sum_{0\leq j_s\leq k}2^{(-\beta/2)\sum_{s=1}^dj_s}= C^{(d)}.
\end{eqnarray*}
Here in inequality (*) we used that for each set $j_1,\dots,j_d$, for each $x$, $x_s\ne i2^{-j_s}$,  exists exactly one set $(n_1,\dots,n_d)$ such that $\chi^{(d)}_{n_1, \dots, n^d}(x)\ne 0$ and $2^{j_s}+1\le n_s\le 2^{j_s+1}$. If we have coordinates $x_s= i2^{-j_s}$, we take into account that $\ii_{ \{i2^{-j_s}\} }(x_s)$ has the coefficient $1/2$, as follows from \eqref{discp}. Now the statement of the lemma is a consequence of \eqref{eqP1} and \eqref{eqP2}.
\end{proof}
\begin{remark}
\label{rm1}
Assume that constants in inequalities \eqref{eqf1} and \eqref{eqf2} depend not only on $f$, but also on $z$. Then the series \eqref{eq:defetas} converges a. s. for each fixed $z\in Z$ and the following analogue of \eqref{eq:absuni}:
\[
\sum_{k=1}^{\infty}\Bigl|\int_{[0,1]^d}(S^{(d)}_{2^k}(f, x)-S^{(d)}_{2^{k-1}}(f, x))\,d\mu(x)\Bigr|\le C_{f,z} C^{(d)}_{\mu}(\omega).
\]
This statement is proved similarly to Lemma \ref{lm1}. We just refer to $C_{f,z}$ everywhere instead of $C_f$.
\end{remark}

The following statement gives the conditions of the continuity with respect to parameter $z\in Z$.

\begin{thm}\label{th:conp}
Let $Z$ be a metric space, and the function $f(z,x): Z\times [0,1]^d\to\rr$ be such that all paths $f(\cdot,x)$ are continuous on $Z$ for each $x$, $f(z,\cdot)$ is continuously differentiable $l$ times on $[0,1]^d$ for each $z$, $l\ge 0$, and
\begin{equation*}
\Bigl|\dfrac{\pr^{r} f(z,x)}{\pr x_{s_1}\dots\pr x_{s_r}}\Bigr|\le C_f,\ x\in[0,1]^d
\end{equation*}
for some constant $C_f>0$ (independent of~$z$) and all $0\le r\le l$, $s_1,\dots,s_r\subset \{1,\dots,d\}$. Let $l$-th derivatives be H\"{o}lder continuous with the exponent $\alpha$ and
\begin{equation*}
\Bigl|\dfrac{\partial^{l} f(z,x^{(1)})}{\partial x_{s_1}\dots\partial x_{s_l}}-\dfrac{\partial^{l} f(z,x^{(2)})}{\partial x_{s_1}\dots\partial x_{s_l}}\Bigr|\le C_f|x^{(1)}-x^{(2)}|^\alpha.
\end{equation*}

If, in addition, $l+\alpha>d/2$, then the random function $\eta$ defined by \eqref{eq:defeta} has a version~\eqref{eq:defetas} with continuous paths on $Z$ a.~s.
\end{thm}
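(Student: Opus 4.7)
The strategy is to show that the candidate version $\tilde\eta(z)$ defined by \eqref{eq:defetas} is continuous in $z$ almost surely, by writing it as a uniformly convergent series of $z$-continuous terms and then invoking the classical fact that a uniform limit of continuous functions is continuous. The hard work has, in effect, already been done in Lemma~\ref{lm1}; the rest is bookkeeping.

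First I would observe that each increment
\[
I_k(z):=\int_{[0,1]^d}\bigl(S^{(d)}_{2^k}(f,x)-S^{(d)}_{2^{k-1}}(f,x)\bigr)\,d\mu(x)
\]
is a \emph{finite} linear combination
\[
I_k(z)=\sum_{(n_1,\dots,n_d)\in\nn_k\setminus\nn_{k-1}} c^{(d)}_{n_1,\dots,n_d}(f(z,\cdot))\,M_{n_1,\dots,n_d},
\]
where $M_{n_1,\dots,n_d}:=\int_{[0,1]^d}\chi^{(d)}_{n_1,\dots,n_d}(x)\,d\mu(x)$ are random variables not depending on $z$ (a single version is fixed, as stated after~\eqref{eq:defetas}). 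Since $f$ is uniformly bounded by $C_f$ on $Z\times[0,1]^d$ and $f(\cdot,x)$ is continuous on $Z$ for each $x$, the ordinary Lebesgue dominated convergence theorem gives continuity of each coefficient $c^{(d)}_{n_1,\dots,n_d}(f(z,\cdot))$ in $z$. Hence, on a set of full probability, every $I_k(\cdot)$ is continuous on $Z$; the same is true of the zeroth term $\int_{[0,1]^d}S^{(d)}_1(f,x)\,d\mu(x)$.

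Next I would apply Lemma~\ref{lm1}, whose assumptions \eqref{eqf1}, \eqref{eqf2} and $l+\alpha>d/2$ are precisely the hypotheses of the theorem. It yields the uniform-in-$z$ bound \eqref{eq:absuni}:
\[
\sum_{k=1}^{\infty}\sup_{z\in Z}|I_k(z)|\le C_f C^{(d)}_\mu(\omega)<\infty\quad\text{a.s.}
\]
So, on a set of full probability, the series defining $\tilde\eta$ converges absolutely and uniformly on $Z$. Since the partial sums are continuous in $z$, the limit $\tilde\eta(\cdot)$ is continuous on $Z$ almost surely. Finally, $\tilde\eta$ is a version of $\eta$ by the convergence statement displayed just before \eqref{eq:defetas}, which combines Lemma~\ref{lm:haarun} with the stochastic dominated convergence theorem \cite[Theorem~1.5]{radbook} applied pointwise in $z$.

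The only substantive point is the uniform-in-$z$ summability \eqref{eq:absuni}, which rests critically on the fact that the constants in \eqref{eqf1}, \eqref{eqf2} are independent of $z$; this is what permits the single exceptional null set (the one arising from Lemma~\ref{lmfkmu} inside the proof of Lemma~\ref{lm1}) to serve simultaneously for all $z\in Z$. Once that is in place, the ``uniform limit of continuous functions'' argument is routine.
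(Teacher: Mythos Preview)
Your proof is correct and follows essentially the same route as the paper's: continuity of the Haar coefficients $c^{(d)}_{n_1,\dots,n_d}(f(z,\cdot))$ in $z$ gives continuity of each partial sum, and the uniform bound \eqref{eq:absuni} from Lemma~\ref{lm1} upgrades this to continuity of the limit $\tilde\eta$. The only cosmetic difference is that the paper cites \eqref{eq:estcn} directly for the continuity of the coefficients, whereas you invoke dominated convergence on the defining integral; both arguments are equivalent.
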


\begin{proof}

From \eqref{eq:estcn} it follows that each $c^{(d)}_{n_1, \dots, n_d}$ is continuous function of variable $z$,
and \eqref{eq:stwok} implies that $\int_{[0,1]^d}S^{(d)}_{2^k}(f, x)\,d\mu(x)$ is continuous for all $k\ge 1$, $\omega\in\Omega$.

By \eqref{eq:absuni}, $\int_{[0,1]^d}S^{(d)}_{2^k}(f, x)\,d\mu(x)$ converges to $\tilde{\eta}(z)$ uniformly a.s. as $k\to\infty$, that implies the statement of our theorem
\end{proof}

\begin{thm}
Let the function $f(z,x): Z\times [0,1]^d\to\mathbb{R}$, $Z=[a,b]$, be continuous differentiable $l+1$ times on $Z\times[0,1]^d$ while
\begin{equation*}
\Bigl|\dfrac{\partial^{r+1} f(z,x)}{\partial{z}\partial x_{s_1}\dots\partial x_{s_r}}\Bigr|\le C_f,\ x\in[0,1]^d,
\end{equation*}
for all $0\le r\le l$, $s_1,\dots,s_r\subset \{1,\dots,d\}$. Moreover, let $l+1$-th derivatives be H\"{o}lder continuous with the exponent $\alpha>0$:
\begin{equation*}
\Bigl|\dfrac{\partial^{l+1} f(z,x^{(1)})}{\partial z\partial x_{s_1}\dots\partial x_{s_l}}-\dfrac{\partial^{l+1} f(z,x^{(2)})}{\partial z\partial x_{s_1}\dots\partial x_{s_l}}\Bigr|\le C_f|x^{(1)}-x^{(2)}|^\alpha.
\end{equation*}
If, in addition, $l+\alpha>d/2$, then paths of a random function $\tilde{\eta}$, which is defined by \eqref{eq:defetas}, have bounded derivatives on $Z$:
\[
\dfrac{d\tilde{\eta}(z)}{dz}=\int_{[0,1]^d}\dfrac{\partial f(z, x)}{\partial z}\,d\mu(x).
\]
\end{thm}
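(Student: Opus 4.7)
The plan is to apply Lemma~\ref{lm1} to the function $g(z,x):=\partial f(z,x)/\partial z$ and then to justify termwise differentiation of the series~\eqref{eq:defetas}. Under the hypotheses of the theorem, $g$ is continuous in $(z,x)$, $l$ times continuously differentiable in $x$ with all derivatives bounded by $C_f$ uniformly in $z$, and its $l$-th $x$-derivatives are H\"older continuous in $x$ with exponent $\alpha$ and constant $C_f$. Since $l+\alpha>d/2$, Lemma~\ref{lm1} applies to $g$ and yields a random function
\[
\tilde{\xi}(z):=\int_{[0,1]^d}S^{(d)}_{1}(g,x)\,d\mu(x)+\sum_{k=1}^{\infty}\int_{[0,1]^d}\bigl(S^{(d)}_{2^k}(g,x)-S^{(d)}_{2^{k-1}}(g,x)\bigr)\,d\mu(x),
\]
where the series converges absolutely and uniformly in $z\in Z$ a.s., with $|\tilde{\xi}(z)|\le C_f C^{(d)}_{\mu}(\omega)$. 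By construction, $\tilde{\xi}(z)$ is a version of $\int_{[0,1]^d}(\partial f(z,x)/\partial z)\,d\mu(x)$.

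Next I would verify that for each fixed $k$ and each $\omega$, the function $z\mapsto\int_{[0,1]^d}S^{(d)}_{2^k}(f,x)\,d\mu(x)$ is $C^1$ on $Z$, with derivative $\int_{[0,1]^d}S^{(d)}_{2^k}(g,x)\,d\mu(x)$. By~\eqref{eq:stwok}, this integral is a finite $\rr$-linear combination, with random coefficients $\int_{[0,1]^d}\chi^{(d)}_{n_1,\dots,n_d}\,d\mu$, of the deterministic Fourier--Haar coefficients $c^{(d)}_{n_1,\dots,n_d}(f)(z)$. Continuity and uniform boundedness of $\partial f/\partial z$ on $Z\times[0,1]^d$ justify differentiation under the $x$-integral, giving
\[
\frac{d}{dz}c^{(d)}_{n_1,\dots,n_d}(f)(z)=c^{(d)}_{n_1,\dots,n_d}(g)(z),
\]
so each partial sum of~\eqref{eq:defetas} is $C^1$ in $z$ and its derivative is the corresponding partial sum for $g$.

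Combining these observations, the partial sums of~\eqref{eq:defetas} are $C^1$ on $Z$ pointwise in $\omega$, their derived series is exactly the series defining $\tilde{\xi}$, which converges uniformly on $Z$ a.s. by the first step, while~\eqref{eq:defetas} itself converges a.s.\ (uniformly in $z$, by Lemma~\ref{lm1} applied to $f$ under the stronger assumption $l+1+\alpha>d/2$; even just pointwise convergence at a single $z_0\in Z$ would suffice). The classical theorem on differentiation of uniformly convergent series of $C^1$ functions then gives that $\tilde{\eta}\in C^1(Z)$ a.s.\ with $d\tilde{\eta}(z)/dz=\tilde{\xi}(z)$, and the a.s.\ boundedness of $d\tilde{\eta}/dz$ on $Z$ follows immediately from the uniform estimate of Lemma~\ref{lm1} applied to $g$.

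The main obstacle I anticipate is precisely the exchange of $d/dz$ and the infinite sum in~\eqref{eq:defetas}, which requires uniform convergence of the differentiated series on all of $Z$. This is exactly the content of the bound~\eqref{eq:absuni} when applied to $g=\partial f/\partial z$, so the uniform version of Lemma~\ref{lm1} (as opposed to the pointwise Remark~\ref{rm1}) is indispensable here. Everything else reduces to standard manipulations: differentiation under a Lebesgue integral in the definition of Haar coefficients, and termwise differentiation of a uniformly convergent series of $C^1$ functions.
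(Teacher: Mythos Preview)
Your proof is correct and mirrors the paper's argument: apply Lemma~\ref{lm1} to $\partial f/\partial z$ for uniform convergence of the derived series, use $\tfrac{d}{dz}\,c^{(d)}_{n_1,\dots,n_d}(f)=c^{(d)}_{n_1,\dots,n_d}(\partial f/\partial z)$, and then differentiate the series~\eqref{eq:defetas} termwise. One small note: the paper invokes Remark~\ref{rm1} rather than Lemma~\ref{lm1} for convergence of the undifferentiated series, since the hypotheses do not give H\"older-$\alpha$ control on the pure $(l{+}1)$-th $x$-derivatives of $f$; your fallback to pointwise convergence at a single $z_0$ is exactly the right fix and aligns with the paper.
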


\begin{proof}
Lemma \ref{lm1} implies that
\begin{eqnarray*}
\int_{[0,1]^d}\dfrac{\partial f(z, x)}{\partial z}\,d\mu(x)=\int_{[0,1]^d}S^{(d)}_{1}\Bigl(\frac{\partial f}{\partial z}, x\Bigr)\,d\mu(x)\\
+\sum_{k=1}^{\infty}\int_{[0,1]^d}\Bigl(S^{(d)}_{2^k}\Bigl(\frac{\partial f}{\partial z}, x\Bigr)-S^{(d)}_{2^{k-1}}\Bigl(\frac{\partial f}{\partial z}, x\Bigr)\Bigr)\,d\mu(x),
\end{eqnarray*}
where the series in the right hand side converges uniformly. According to equalities
\begin{eqnarray*}
\int_{[0,1]^d}{S^{(d)}_{2^k}}\Bigl(\frac{\partial f}{\partial z},x\Bigr)\,d\mu(x)\\
=\sum_{(n_1,\dots,n_d)\in\mathbb{N}_k} c^{(d)}_{n_1, \dots, n_d}\Bigl(\frac{\partial f}{\partial z}\Bigr)\int_{[0,1]^d} \chi^{(d)}_{n_1, \dots, n_d}(x)\,d\mu(x)\\
\stackrel{\eqref{eq:estcn}}{=}\sum_{(n_1,\dots,n_d)\in\mathbb{N}_k} \frac{\partial c^{(d)}_{n_1, \dots, n_d}(f)}{\partial z}\int_{[0,1]^d} \chi^{(d)}_{n_1, \dots, n_d}(x)\,d\mu(x)\\
=\frac{d}{d z}\int_{[0,1]^d}{S^{(d)}_{2^k}}(f,x)\,d\mu(x)
\end{eqnarray*}
and Remark \ref{rm1}, we can differentiate the series in \eqref{eq:defetas} and obtain that
\begin{eqnarray*}
\dfrac{d\tilde{\eta}(z)}{dz}=\frac{d}{dz}\int_{[0,1]^d}S^{(d)}_{1}(f, x)\,d\mu(x)\\
+\sum_{k=1}^{\infty}\frac{d}{dz}\int_{[0,1]^d}\bigl(S^{(d)}_{2^k}(f, x)-S^{(d)}_{2^{k-1}}(f, x)\bigr)\,d\mu(x)
=\int_{[0,1]^d}\dfrac{\partial f(z, x)}{\partial z}\,d\mu(x),
\end{eqnarray*}
which finishes the proof.
\end{proof}

\section{Integral as a function of upper limit}\label{sc:upper}

For continuous function $f(x): [0,1]^d\to\rr$ and  $y=(y_1,\dots,y_d)\in [0,1]^d$, an we consider the random function
\begin{equation}\label{eq:defxi}
\xi(y)=\int_{\prod_{s=1}^d [0,y_s]}f(x)\,d\mu(x).
\end{equation}

Lemma~\ref{lm:haarun} implies that
\begin{equation}\label{eq:defverxi}
\begin{split}
\tilde{\xi}(y)=\int_{\prod_{s=1}^d [0,y_s]}S^{(d)}_{1}(f, x)\,d\mu(x)\\
+\sum_{k=1}^{\infty}\int_{\prod_{s=1}^d [0,y_s]}(S^{(d)}_{2^k}(f, x)-S^{(d)}_{2^{k-1}}(f, x))\,d\mu(x).
\end{split}
\end{equation}
is the version of $\xi(y)$.

We refer to the following assumptions on SM $\mu$.

\begin{assumption}\label{assbc} Random function $\mu(x)=\mu\Bigl(\prod_{i=1}^{d}[0,x_i]\Bigr),\ x\in[0,1]^d$,
has continuous paths.
\end{assumption}

In the following condition on the uniform modulus of continuity we take the continuous version of $\mu$.

\begin{assumption}\label{assbmc}
If $d\geq 2$ then $\sum_{k=1}^\infty k^{d-2}\omega(\mu, 2^{-k})<\infty\quad \textrm{a.\ s.}$
\end{assumption}

It is easy to see that A\ref{assbmc} holds for $\mu(x)$ with H\"{o}lder continuous paths. It also holds if, for example, $\omega(\mu,\tau)\leq C|\ln\tau|^{-\epsilon}$, $\epsilon>d-1$.
\begin{example}
Denote
\[
q(\tau)=|\ln \tau|^{-\gamma},\,\gamma>d-1/2,\qquad \mathcal{K}(z)=\sqrt{\dfrac{d q^2(z)}{d z}}.
\]
We introduce the following stochastic process:
\begin{equation*}
B^{q}(x)=\int_{\prod_{s=1}^d [0,x_s]} \prod_{s=1}^d \mathcal{K}(x_s-y_s) dW(y),\ x=(x_1,\dots,x_d)
\end{equation*}
where $W$ is a $d$-dimensional Wiener process. Theorem 3.1 in~\cite{hino23} implies the existence of rectangle $[t,T]=\prod_{i=1}^d[t_i,T_i]\subset [0,1]^d$ such that
\begin{equation}
\label{eqgauss2}
\lim_{\varepsilon\to 0+}\sup_{\substack{x,\bar{x}\in[t,T]\\ \delta_{x,\bar{x}}\leq \varepsilon}}\frac{|B^q(x)-B^q(\bar{x})|}{\delta_{x,\bar{x}}\sqrt{\ln\Bigl(\frac{D}{q^-1(\delta_{x,\bar{x}})}\Bigr)}}=C\ \text{a. s.}
\end{equation}
Here $\delta_{x,\bar{x}}=\|B^q(x)-B^q(\bar{x})\|_{L^2(\Omega)}\leq Cq(|x-\bar{x}|)$, $D$ is a diameter of $[t,T]$. From \eqref{eqgauss2} it follows that for all $x,\bar{x}\in[t,T]$
\begin{eqnarray*}
|B^q(x)-B^q(\bar{x})|\leq C\delta_{x,\bar{x}}\sqrt{\ln D+\delta_{x,\bar{x}}^{-1/\gamma}}\\
\stackrel{\gamma>1/2}{\leq}C|\ln|x-\bar{x}||^{-\gamma}\sqrt{\ln D+|\ln|x-\bar{x}||}\\
\leq C|\ln|x-\bar{x}||^{1/2-\gamma}.
\end{eqnarray*}
Therefore, $\omega_{[t,T]}(B^q,\tau)\leq C|\ln\tau|^{1/2-\gamma}$.
\end{example}
Now we are ready to formulate the main result of the section.
\begin{thm}\label{th:conpx}
Let Assumptions A\ref{assbc} and A\ref{assbmc} hold, and the function $f(x): [0,1]^d\to\rr$ be continuously differentiable $d$ times on $[0,1]^d$.

Then, for the random function $\xi$ defined by \eqref{eq:defxi}, version~\eqref{eq:defverxi} has continuous paths on $[0,1]^d$ a.~s.
\end{thm}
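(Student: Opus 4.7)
I would follow the same scheme that worked in Theorem~\ref{th:conp}: show (i) each partial sum in the series defining $\tilde\xi$ in~\eqref{eq:defverxi} is a.s.\ continuous in $y$, and (ii) the tail of that series converges uniformly in $y\in[0,1]^d$ a.s. A uniform limit of continuous functions is continuous, so this suffices. The new wrinkle, compared with the parameter-dependent case, is that the $\mu$-integrand now depends on $y$ through its support, not only through a deterministic factor, so continuity of $\mu$ itself (A\ref{assbc}) becomes essential.

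\textbf{Step 1 (continuity of partial sums).} Each $S^{(d)}_{2^k}(f,x)$ is a finite $\mathbb{R}$-linear combination of indicator functions of dyadic boxes. Hence
\[
\int_{\prod_{s=1}^d[0,y_s]}S^{(d)}_{2^k}(f,x)\,d\mu(x)
\]
is a finite $\mathbb{R}$-linear combination of values $\mu(\prod_s[a_s,b_s])$ whose corners are either dyadic points or coordinates of $y$. By inclusion--exclusion each such value is a linear combination of values of the distribution function $z\mapsto\mu(\prod_s[0,z_s])$ at points depending continuously on $y$; A\ref{assbc} then gives a.s.\ continuity of the partial sum.

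\textbf{Step 2 (uniform tail bound).} I would show
\[
\sum_{k=1}^\infty\sup_{y\in[0,1]^d}\Big|\int_{\prod[0,y_s]}\bigl(S^{(d)}_{2^k}(f,x)-S^{(d)}_{2^{k-1}}(f,x)\bigr)\,d\mu(x)\Big|<\infty\quad\text{a.s.}
\]
Expanding in the Haar basis, the integral inside equals
\[
\sum_{(n_1,\dots,n_d)\in\mathbb{N}_k\setminus\mathbb{N}_{k-1}}
c^{(d)}_{n_1,\dots,n_d}(f)\,\tau_{n_1,\dots,n_d}(y),\qquad
\tau_{n_1,\dots,n_d}(y):=\int_{\prod[0,y_s]}\chi^{(d)}_{n_1,\dots,n_d}(x)\,d\mu(x).
\]
Since $f\in C^d([0,1]^d)$, the computation of~\eqref{eq:estcn} with $l=d-1$ and H\"older exponent $\alpha=1$ produces $|c^{(d)}_{n_1,\dots,n_d}(f)|\le C_f\,2^{-(3/2)\sum_s j_s}$. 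For the random factor, expanding the product structure of $\chi^{(d)}_{n_1,\dots,n_d}$ as in~\eqref{eq:estcn} gives
\[
\tau_{n_1,\dots,n_d}(y)=2^{\sum j_s/2}\sum_{\varepsilon\in\{0,1\}^d}(-1)^{|\varepsilon|}\,\mu\Bigl(\prod_s C_s^{\varepsilon_s}(y)\Bigr),
\]
where each $C_s^{\varepsilon_s}(y)\subset\Delta_{n_s}$ has length $\le 2^{-j_s-1}$. Because $(n_s)\in\mathbb{N}_k\setminus\mathbb{N}_{k-1}$ forces $\max_s j_s=k-1$, the shortest side of every such box is $\le 2^{-k}$; combined with the telescoping estimate $|\mu(\prod_s[a_s,b_s])|\le 2^{d-1}\omega(\mu,\min_s(b_s-a_s))$ (obtained under A\ref{assbc} by differencing the $d$-fold alternating sum in the direction of the shortest side), this yields $|\tau_{n_1,\dots,n_d}(y)|\le C_d\,2^{\sum j_s/2}\,\omega(\mu,2^{-k})$ uniformly in $y$. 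Combining the two bounds, a Cauchy--Schwarz splitting with weight $2^{-\beta\sum j_s}$ as in the proof of Lemma~\ref{lm1}, using Lemma~\ref{lmfkmu} to absorb the random factor $\bigl(\sum_{(n_s)}\tau_{(n_s)}(y)^2\cdot 2^{\beta\sum j_s}\bigr)^{1/2}$ into an a.s.\ finite constant (first pointwise in $y$, then extended to $\sup_y$ by the a.s.\ continuity of $y\mapsto\tau_{(n_s)}(y)$ on the compact cube), I would arrive at an estimate of the form $C_f C_d C_\mu^{(d)}(\omega)\sum_k k^{d-2}\omega(\mu,2^{-k})$, finite a.s.\ by A\ref{assbmc}. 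Steps~1 and~2 together give the claim.

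\textbf{Main obstacle.} The hard part is squeezing the tail bound down to exactly the growth $k^{d-2}\omega(\mu,2^{-k})$ dictated by A\ref{assbmc}. The crude triangle-inequality bound $\sum_{(n_s)\in\mathbb{N}_k\setminus\mathbb{N}_{k-1}}|c^{(d)}|\sup_y|\tau_{(n_s)}(y)|$ collapses the coefficient gain against the $2^{\sum j_s}$ counting of tuples at level $k-1$ and leaves a factor $\#\{(j_1,\dots,j_d):\max j_s=k-1\}\sim d\,k^{d-1}$, one power of $k$ too many. Saving that extra $k$ is what forces the Cauchy--Schwarz-plus-Lemma~\ref{lmfkmu} split, applied with careful weights and extended to the uniform bound by invoking continuity in $y$ — this is the genuinely non-routine step and the only part of the argument where A\ref{assbc} and A\ref{assbmc} are both used together.
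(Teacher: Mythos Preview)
Your Step~1 is fine and matches the paper's argument. Your identification of the target in Step~2---showing the $k$-th term is dominated by $k^{d-2}\omega(\mu,2^{-k})$---is also correct. But the mechanism you propose for saving the extra power of $k$ does not work, and the paper's actual route is quite different.

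\medskip
\textbf{The gap.} Your plan is to bound $\sup_y\bigl(\sum_{(n_s)}\tau_{(n_s)}(y)^2\,2^{\beta\sum j_s}\bigr)^{1/2}$ by an a.s.\ finite random constant using Lemma~\ref{lmfkmu}, ``first pointwise in $y$, then extended to $\sup_y$ by continuity.'' This fails for two reasons. First, Lemma~\ref{lmfkmu} applied to the $y$-dependent integrands $\mathbf{1}_{\prod[0,y_s]}\chi^{(d)}_{(n_s)}$ gives, for each fixed $y$, a finite sum on a full-measure set \emph{depending on $y$}, with no uniform control on the value; continuity of each individual $\tau_{(n_s)}(y)$ does not pass to the infinite sum (an increasing limit of continuous functions is only lower semicontinuous, and pointwise finiteness does not imply boundedness on a compact). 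Second, even granting a $y$-uniform bound of this type, you have not explained how the Cauchy--Schwarz split interacts with the $\omega(\mu,2^{-k})$ bound on $|\tau|$ to produce $k^{d-2}$ rather than $k^{d-1}$; a Cauchy--Schwarz rearrangement does not by itself gain a power of $k$ in a lattice-point sum.

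\medskip
\textbf{What the paper does instead.} The key idea you are missing is a geometric decomposition of the sum over $(n_1,\dots,n_d)\in\nn_k\setminus\nn_{k-1}$ into \emph{interior} terms (all $(\Delta_{n_s})^{\varepsilon_s}\subset[0,y_s]$) and \emph{boundary} terms (some $y_s\in\Delta_{n_s}$). For interior terms the value $\mu\bigl(\prod_s(\Delta_{n_s})^{\varepsilon_s}\bigr)$ no longer depends on $y$, so the Lemma~\ref{lm1} machinery (Cauchy--Schwarz followed by Lemma~\ref{lmfkmu} with \emph{fixed} functions) applies verbatim and gives an a.s.\ finite bound with no $\omega(\mu,\cdot)$ factor at all. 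For boundary terms one uses your modulus-of-continuity estimate, but now there are few of them: the weighted count $\sum 2^{-\sum j'_s}$ over boxes meeting $\partial\prod[0,y_s]$ telescopes to a \emph{second} difference $(k+1)^d-2k^d+(k-1)^d+O(2^{-\beta k})\asymp k^{d-2}$, which is exactly where Assumption~A\ref{assbmc} enters. The saving of one power of $k$ is thus a combinatorial fact about how many dyadic boxes touch a coordinate face, not an analytic consequence of Cauchy--Schwarz.
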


\begin{proof}
For version~\eqref{eq:defverxi}, we have that
\begin{equation}\label{eq:txilim}
\tilde{\xi}(y)=\lim_{k\to\infty} \int_{\prod_{s=1}^d [0,y_s]}S^{(d)}_{2^k}(f,x)\,d\mu(x).
\end{equation}
Here $S^{(d)}_{2^k}(f,x)$ is a simple function. By our assumption, SM $\mu$ has a continuous paths.
Therefore, for each $k$, the random function of variable $y$
\[
\int_{\prod_{s=1}^d [0,y_s]}S^{(d)}_{2^k}(f,x)\,d\mu(x)
\]
has a continuous paths. We will prove that the convergence in~\eqref{eq:txilim} is uniform in $y\in [0,1]^d$ a.s., and this will imply the continuity of $\tilde{\xi}$.

Our aim is to show that
\begin{equation*}
\sum_{k=1}^{\infty}\sup_{y\in [0,1]^d}\Bigl|\int_{\prod_{s=1}^d [0,y_s]}(S^{(d)}_{2^k}(f, x)-S^{(d)}_{2^{k-1}}(f, x))\,d\mu(x)\Bigr|\le \tilde{C}^{(d)}_{f,\mu}(\omega)
\end{equation*}
for some random constant $\tilde{C}^{(d)}_{f,\mu}(\omega)<\infty$ a. s. that may depends on $d,f,\mu$.

Recall that paths of $\mu(x_1,\dots,x_d)$ are continuous, therefore for any cut of the set $\prod_{s=1}^d[y_{s1},y_{s2}]\subset [0,1]^d$ we have
\begin{equation}\label{eq:muzero}
\mu\Bigl(\prod_{s=1}^d[y_{s1},y_{s2}]\cap \{x_s=a\}\Bigr)=0.
\end{equation}

Thus, we obtain
\begin{equation*}
\begin{split}
\int_{\prod_{s=1}^d [0,y_s]}(S^{(d)}_{2^k}(f, x)-S^{(d)}_{2^{k-1}}(f, x))\,d\mu(x)\\
=\sum_{(n_1,\dots,n_d)\in \nn_k\setminus \nn_{k-1}} c^{(d)}_{n_1, \dots, n_d}(f)\int_{\prod_{s=1}^d [0,y_s]} \chi^{(d)}_{n_1, \dots, n_d}(x)\,d\mu(x)\\
\stackrel{\eqref{eq:muzero}}{=}   \sum_{(n_1,\dots,n_d)\in \nn_k\setminus \nn_{k-1}} c^{(d)}_{n_1, \dots, n_d}(f)2^{(j_1+\dots+j_d)/2}\\
\times \int_{\prod_{s=1}^d [0,y_s]}
\prod_{1\le s\le d,n_s\ge 2}\Bigl(\ii_{\Delta_{n_s}^+}(x_s)-\ii_{\Delta_{n_s}^-}(x_s)\Bigr)\,d\mu(x)\\
= \sum_{(n_1,\dots,n_d)\in \nn_k\setminus \nn_{k-1}} c^{(d)}_{n_1, \dots, n_d}(f) 2^{(j_1+\dots+j_d)/2}\\
\times \sum_{\varepsilon_s\in\{+, -\}} \mu\Bigl(\prod_{1\le s\le d}(\Delta_{n_s})^{\varepsilon_s}\cap \prod_{1\le s\le d} [0,y_s]\Bigr) \\
:=A_{k1}(y)+A_{k2}(y).
\end{split}
\end{equation*}

Here $A_{k1}(y)$ is the sum of terms with $(n_1,\dots,n_d)\in \nn_k\setminus \nn_{k-1}$, $\varepsilon_s\in\{+, -\}$ such that
\[
\prod_{1\le s\le d}(\Delta_{n_s})^{\varepsilon_s}\subset \prod_{s=1}^d [0,y_s]
\]
(here for $n_s=1$ we take only $\varepsilon_s=+$ and $(\Delta_{n_s})^{\varepsilon_s}=(0,1)$).
Taking into account the definition of $(\Delta_{n_s})^{+}$ and $(\Delta_{n_s})^{-}$ in \eqref{eq:defdel}, we get that in $A_{k1}(y)$
\[
\prod_{1\le s\le d}(\Delta_{n_s})^{\varepsilon_s}\cap \prod_{s=1}^d [0,y_s]=\prod_{1\le s\le d}(\Delta_{n_s})^{\varepsilon_s}=\prod_{1\le s\le d}\Delta_{m_s}
\]
for some $(m_1,\dots,m_d)\in \nn_{k+1}\setminus \nn_{k}$. Therefore,
\begin{equation*}
\begin{split}
\sum_{k=1}^\infty \sup_{y\in[0,1]^d} |A_{k1}(y)|\\
\le \sum_{k=1}^\infty  \sum_{(n_1,\dots,n_d)\in \nn_k\setminus \nn_{k-1}} |c^{(d)}_{n_1, \dots, n_d}(f)| 2^{(j_1+\dots+j_d)/2}\sum_{\varepsilon_s\in\{+, -\}}\Bigl|\mu\Bigl(\prod_{1\le s\le d}(\Delta_{n_s})^{\varepsilon_s}\Bigr)\Bigr|\\
\stackrel{\eqref{estc}}{\le}  C\sum_{k=1}^\infty  \sum_{(n_1,\dots,n_d)\in \nn_k\setminus \nn_{k-1}} 2^{-(j_1+\dots+j_d)}\sum_{\varepsilon_s\in\{+, -\}}\Bigl|\mu\Bigl(\prod_{1\le s\le d}(\Delta_{n_s})^{\varepsilon_s}\Bigr)\Bigr|\\
\le C\sum_{k=1}^\infty  \sum_{(m_1,\dots,m_d)\in \nn_{k+1}\setminus \nn_{k}} 2^d 2^{-(j_1'+\dots+j_d')}\Bigl|\mu\Bigl(\prod_{1\le s\le d}\Delta_{m_s}\Bigr)\Bigr|.
\end{split}
\end{equation*}
Here $j'_s$ are taken such that $2^{j'_s}+1\le m_s\le 2^{j'_s+1}$, and $j'_s=j_s+1$ for respective $m_s$.
From estimates in \eqref{eqP1}, \eqref{eqP2} it follows that
\[
\sum_{k=1}^\infty \sup_{y\in[0,1]^d} |A_{k1}(y)|\le \tilde{C}^{(d)}_{\mu}(\omega).
\]
for some random constant $\tilde{C}^{(d)}_{\mu}(\omega)$ that depends only of $d$ and $\mu$. (It is easy to see that all estimates in  \eqref{eqP1}, \eqref{eqP2} remain valid if we change $\chi^{(d)}_{n_1, \dots, n_d}(x)$ to $2^{(j_1+\dots+j_d)/2}\prod_{1\le s\le d}\ii_{\Delta_{m_s}}(x_s)$.)

Further, we estimate $A_{k2}(y)$, i. e. the sum of terms with $(n_1,\dots, n_d)\in\nn_k\setminus \nn_{k-1}$ such that $y_s\in \Delta_{n_s}$
for some $s$.

We get
\begin{eqnarray*}
|A_{k2}(y)|\le \sum_{\substack{(n_1,\dots,n_d)\in \mathbb{N}_k\setminus \mathbb{N}_{k-1},\\ \exists y_s\in (\Delta_{n_s})^{\varepsilon_s}}}
2^{-\sum_{1\le s\le d,n_s\ge 2}j_s} \Bigl|\mu\Bigl(\prod_{1\le s\le d}((\Delta_{n_s})^{\varepsilon_s}\cap  [0,y_s])\Bigr)\Bigr|\\
=2^d\sum_{\substack{(m_1,\dots,m_d)\in \mathbb{N}_k\setminus \mathbb{N}_{k-1},\\ \exists y_s\in \Delta_{m_s}}}
2^{-\sum_{1\le s\le d,m_s\ge 3}j'_s} \Bigl|\mu\Bigl(\prod_{1\le s\le d}(\Delta_{m_s}\cap  [0,y_s])\Bigr)\Bigr|\\
\leq 2^d\omega(\mu, 2^{-k-1})\sum_{\substack{(m_1,\dots,m_d)\in \mathbb{N}_{k+1}\setminus \mathbb{N}_{k},\\ \exists y_s\in \Delta_{m_s}}}
2^{-\sum_{1\le s\le d,m_s\ge 3}j'_s}:=D_{k+1}.
\end{eqnarray*}

Now we check the convergence of the series $\sum_{k=1}^\infty D_k$ with the help of the sum
\[
\sum_{\substack{(m_1,\dots,m_d)\in \mathbb{N}_k,\\ \exists y_s\in \Delta_{m_s}}}
2^{-\sum_{1\le s\le d,m_s\ge 3}j'_s}:=T_k.
\]
Notice that for each fixed set $(j'_1,\dots,j'_d)$ there exist at most
\[
2^{\sum_{1\le s\le d,m_s\ge 3} j'_s}-\prod_{1\le s\le d,m_s\ge 3}\bigl(2^{j'_s}-1\bigr)
\]
sets $(m_1\dots,m_d)$, for which $\exists y_s\in \Delta_{m_s}$. Denoting the set of indexes $s$, which satisfy the equality $n_s=1$, as $A$, we obtain that
\begin{eqnarray*}
\sup_{y}T_k\leq\sum_{A\subset\{1,\dots,d\}}\sum_{\substack{1\leq j_i'\leq k,\\i\in A}}\biggl(1-\prod_{i\in A}\bigl(1-2^{-j'_i}\bigr)\biggr)\\
=\sum_{A\subset\{1,\dots,d\}}\sum_{\substack{1\leq j_i'\leq k,\\i\in A}}\sum_{\substack{B\subset A,\\B\neq\emptyset}}(-1)^{|B|+1}2^{-\sum_{i\in B}j'_i}\\
\leq \sum_{A\subset\{1,\dots,d\}}\sum_{\substack{B\subset A,\\B\neq\emptyset}}(-1)^{|B|+1}\sum_{\substack{1\leq j_i'\leq k,\\i\in A}}2^{-\sum_{i\in B}j'_i}\\
\stackrel{(*)}{=}\sum_{u=1}^{d}\sum_{v=1}^{u}(-1)^{v+1}\left(\hspace{-3pt}\begin{array}{c}d\\u\end{array}\hspace{-3pt}\right)\left(\hspace{-3pt}\begin{array}{c}u\\v\end{array}\hspace{-3pt}\right)\sum_{\substack{1\leq j'_i\leq k,\\ 1\leq i\leq u}}2^{-\sum_{1\leq i\leq v}j'_i}\\
=\sum_{u=1}^d\sum_{v=1}^u (-1)^{v+1}\left(\hspace{-3pt}\begin{array}{c}d\\u\end{array}\hspace{-3pt}\right)\left(\hspace{-3pt}\begin{array}{c}u\\
v\end{array}\hspace{-3pt}\right)(1-2^{-k})^vk^{u-v}\\
=\sum_{u=1}^d\left(\hspace{-3pt}\begin{array}{c}d\\u\end{array}\hspace{-3pt}\right)(k^{u}-(k-1+2^{-k})^u)\\
=(k+1)^d-(k+2^{-k})^d.
\end{eqnarray*}
Here in $(*)$ we used the fact that sum $(-1)^{|B|+1}\sum_{\substack{1\leq j_i'\leq k,\\i\in A}}2^{-\sum_{i\in B}j'_i}$ depends only on $|A|$ and $|B|$. Therefore,
\begin{eqnarray*}
D_k\leq 2^d\omega(\mu, 2^{-k})\bigl((k+1)^d-(k+2^{-k})^d-k^d+(k-1+2^{-k+1})\bigr)\\
=2^d\omega(\mu, 2^{-k})\bigl((k+1)^d-2k^d+(k-1)^d+O(2^{-\beta k})\bigr),
\end{eqnarray*}
where $0<\beta<1$. Applying Assumption~A\ref{assbmc} we get the statement of the theorem.
\end{proof}

If $\mu(x)$ have a H\"{o}lder continuous paths with exponent $\gamma>0$, then the statement of Theorem~\ref{th:conpx} follows from Theorem~16~\cite{harang21}. Moreover, Theorem~16~\cite{harang21} states that $\xi$ has the version that is H\"{o}lder continuous with the same exponent $\gamma>0$.
In \cite{harang21}, integral is considered in the Young sense, and its value coincides with value of our integral with respect to H\"{o}lder continuous $\mu$.

\begin{thm}
\label{th:bsintd}
Let Assumptions A\ref{assbm}, A\ref{assbc} and A\ref{assbmc} hold, and the function $f(x): [0,1]^d\to\rr$ be continuously differentiable $d$ times on $[0,1]^d$.

Then, for the random function $\xi$ defined by \eqref{eq:defxi}, for any $1\le p<+\infty,\ 0<\alpha<\min\{1/p,1/2\}$ version~\eqref{eq:defverxi} with probability 1 belongs to the Besov space $B^\alpha_{p,p}([0,1]^d)$.
\end{thm}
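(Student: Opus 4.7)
The plan is to reduce the statement to Theorem~\ref{thnbspd} by introducing an auxiliary stochastic measure built from $f$ and $\mu$. Specifically, define $\nu:\mathcal{B}([0,1]^d)\to\llf_0$ by
\[
\nu(A)=\int_A f(x)\,d\mu(x).
\]
Since $f$ is continuously differentiable on the compact set $[0,1]^d$, it is bounded, so $\mathbf{1}_A f$ is bounded and measurable, and therefore $\mu$-integrable for every Borel $A$. The point is that the cumulative function of $\nu$, namely $\nu\bigl(\prod_{s=1}^d[0,y_s]\bigr)$, equals $\xi(y)$, so Theorem~\ref{thnbspd} applied to $\nu$ will deliver the Besov regularity of (the continuous version of) $\xi$.

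To carry this out I first verify that $\nu$ is an SM in the sense of Definition~\ref{def:stme}. Finite additivity is immediate from linearity of the integral. For $\sigma$-additivity in probability, take $A_n\downarrow\emptyset$; the sequence $\mathbf{1}_{A_n}f$ converges pointwise to $0$ and is dominated by the bounded, hence $\mu$-integrable, function $|f|$, so the dominated convergence theorem for SM-integrals (\cite[Theorem~1.5]{radbook}) yields $\nu(A_n)\stackrel{\pp}{\to}0$. Next I verify Assumption~A\ref{assbm} for $\nu$: the same finite measure $\mm$ that works for $\mu$ works for $\nu$, because $\int g^2\,d\mm<\infty$ together with $|f|\le C_f$ give $\int (gf)^2\,d\mm\le C_f^2\int g^2\,d\mm<\infty$, hence $gf$ is $\mu$-integrable and therefore $g$ is $\nu$-integrable, with $\int g\,d\nu=\int gf\,d\mu$.

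It remains to check that the cumulative function of $\nu$ has continuous paths. This is precisely the content of Theorem~\ref{th:conpx}: under Assumptions A\ref{assbc}, A\ref{assbmc} and the assumed $C^d$ regularity of $f$, the version $\tilde{\xi}(y)$ from \eqref{eq:defverxi} is continuous a.s. Thus $\nu$ satisfies both hypotheses of Theorem~\ref{thnbspd}, and applying that theorem to $\nu$ shows the continuous version of $\nu\bigl(\prod_{s=1}^d[0,y_s]\bigr)$ lies in $B^\alpha_{p,p}([0,1]^d)$ a.s. for every $1\le p<\infty$ and $0<\alpha<\min\{1/p,1/2\}$. By uniqueness of continuous modifications this version coincides with $\tilde{\xi}$, which completes the proof.

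The only real subtlety I anticipate is confirming that $\nu$ really is an SM on all Borel sets (not only on a subalgebra), and that the continuous version to which Theorem~\ref{thnbspd} applies is literally $\tilde{\xi}$ rather than some other modification. The first follows from the standard fact that the indefinite integral of a bounded measurable function with respect to an SM is again an SM, and the second from the a.s.\ uniqueness of continuous versions of a given stochastic process on $[0,1]^d$.
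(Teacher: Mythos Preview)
Your proof is correct and is essentially the paper's argument made explicit: the paper's proof consists of the single line ``The statement follows from Theorems~\ref{thnbspd} and~\ref{th:conpx}'', which implicitly relies on exactly the construction you spell out---introducing the indefinite-integral SM $\nu(A)=\int_A f\,d\mu$, checking it satisfies Assumption~A\ref{assbm}, and invoking Theorem~\ref{th:conpx} for path continuity of its cumulative function before applying Theorem~\ref{thnbspd}. Your verification of the details (SM property via dominated convergence, A\ref{assbm} via boundedness of $f$, identification of the continuous version with $\tilde{\xi}$) is accurate and fills in precisely what the paper leaves to the reader.
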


\begin{proof}
The statement follows from Theorems \ref{thnbspd} and \ref{th:conpx}.
\end{proof}

\bibliographystyle{plain}
\bibliography{ManikinRadchenkoIntRmBib}

\end{document}